\newcommand{\R}{{\ensuremath{\mathbb{R}}}}
\newcommand{\Z}{{\ensuremath{\mathbb{Z}}}}
\newcommand{\1}{\mathbb{I}}
\newcommand{\X}{\boldsymbol{X}}
\newcommand{\Y}{\boldsymbol{Y}}
\newcommand{\ZZ}{\boldsymbol{Z}}
\newcommand{\Q}{\boldsymbol{Q}}
\newcommand{\x}{\boldsymbol{x}}
\newcommand{\tB}{\boldsymbol{t}}
\newcommand{\sB}{\boldsymbol{s}}
\newcommand{\bt}{\boldsymbol{\Theta}}
\newcommand{\be}{\boldsymbol{\eta}}
\renewcommand{\P}{\ensuremath{\mathbb{P}}}
\renewcommand{\dj}{d\kern-0.4em\char"16\kern-0.1em}
 \newcommand{\fidi}{\xrightarrow{\text{fidi}}}
 \newcommand{\dto}{\xrightarrow{d}}
\newcommand{\E}{\mathbb{O}}
\newcommand{\EE}{\mathbb{E}}
\newcommand{\toi}{\to\infty}
 \newcommand{\dsum}{\displaystyle\sum}
  \renewcommand{\le}{\leqslant}
     \newcommand{\eqd}{\stackrel{d}{=}}
     \newcommand{\asto}{\xrightarrow{as}}
    \newcommand{\vto}{\xrightarrow{v}}   
 \newcommand{\pto}{\xrightarrow{P}}
 \newcommand{\vep}{\varepsilon}
 \newtheorem{Thm}{Theorem}[section]
\newtheorem{Prop}[Thm]{Proposition}
\newtheorem{Rem}{Remark}
\newenvironment{proof}{\noindent \textbf{Proof.}\, \, }{\null\hfill$\blacksquare$\hskip 2mm\vskip2mm}
\numberwithin{equation}{section}
\numberwithin{equation}{section}
\title{Complete convergence theorem for stationary heavy tailed sequences}
\author[*]{Bojan Basrak}
\author[**]{Azra Tafro}
\affil[*]{Department of Mathematics, University of Zagreb, Croatia\\ bbasrak@math.hr}
\affil[**]{Department of Mathematics, University of Zagreb, Croatia\\ atafro@math.hr}
\begin{document}
\maketitle
\begin{abstract}
For a class of stationary regularly varying and weakly dependent multivariate time series $(\X_n)$,
we prove the so-called complete convergence result for the space--time point processes
of the form  $N_n  = \sum_{i=1}^n \delta_{(i/n, \X_i/a_n)}.$ 
As an application of our main theorem, we give a simple proof of the invariance principle
 for the corresponding partial maximum process.

\noindent{\it AMS Subject Classification}: MSC 60G70 \and MSC 60F05 \and MSC 60G55\\
\noindent{\it Keywords}: regular variation, point processes, complete convergence, extremal process
 
\end{abstract}

\section{Introduction}
Point processes theory is widely recognized as a useful and  elegant tool for the extremal analysis of stochastic processes.  This approach is  splendidly illustrated by Resnick \cite{Res08} or Leadbetter and Rootz\'en \cite{LeRo88}.  It is well known for instance that for an iid sequence of random variables $(X_n)$, 
regular variation of the marginal distribution is equivalent to the so called complete convergence result,
that is to the convergence of 
point processes of the form
\begin{equation} \label{e11}
N_n  = \sum_{i=1}^n \delta_{(i/n, X_i/a_n)},
\end{equation} 
towards a suitable Poisson point process.
Such a statement then  yields nearly all relevant asymptotic distributional properties about the sequence 
$(X_n)$, cf. \cite{LeRo88}.  There exists a rich literature on extensions of this result to dependent stationary sequences in both univariate and multivariate cases, 
for an illustration consider for instance
Davis and Resnick~\cite{DR85},
Davis and Hsing~\cite{DH},
Davis and Mikosch~\cite{DM98},
Hsing and Leadbetter~\cite{HL98} or
Basrak et al. ~\cite{BKS}. 
One of the earliest and key results in this area was obtained by Mori \cite{Mo77} who
showed that   all possible limits for the point processes $N_n$ have the form of
a Poisson cluster process provided that the random variables $X_n'$s are strongly mixing with a strictly positive extremal index. In such a case the limit can be written
as
\[ 
 N=\sum_{i}\sum_{j}\delta_{(T_i, P_i Q_{ij})},
\]
where $\sum_{i}\delta_{(T_i,P_i)}$ is a suitable Poisson process and 
$(\sum_{j}\delta_{Q_{ij}})_i$ represents an independent iid sequence of point processes
on ${\R}\setminus \{0\}$ with the property that  $\max_{j}{|Q_{ij}|}=1$ for all $i$.
However,  the relationship between 
the sequence $(X_n)$ and 
the 
distribution of the limiting process  $N$ or its components is up to now only partly understood.
Due to the dependence, for a general sequence $(X_n)$, there is no 
 complete convergence result which determines the shape of the limiting process
 $N$  in the form suggested by Mori.
It is our main goal here to give such a result for a rather wide
 class of weakly dependent  regularly varying processes, even in the multivariate setting.

%

 Recall that a $d$-dimensional random vector $\X$ is regularly varying with index $\alpha > 0$  if there exists a random vector $\boldsymbol{\Theta} \in \mathbb{S}^{d-1}$ such that 
\begin{equation}\label{def:RV_d}
\frac{1}{\P(\|\X\| > x)}\P(\|\X\| > ux, \X / \|\X\| \in \cdot )\Rightarrow u^{-\alpha} \P(\bt \in \cdot),
\end{equation}
for every $u>0$ as $x \toi$, where $\Rightarrow$ denotes the weak convergence of measures. Note that the definition does not depend on the choice of the norm, i.e. if \eqref{def:RV_d} holds for some norm in $\R^d$, it holds for all norms, with different
distributions of $\bt$ clearly. 
A $d$--dimensional time series $(\X_n)_{n\in \Z}$ is regularly varying if all of the finite-dimensional vectors $(\X_k,\dots,\X_l),\  k,l \in \Z$ are regularly varying, see Davis and Hsing~\cite{DH} for instance. We will consider a strictly stationary regularly varying process $(\X_n)_{n\in \Z}$. This means in particular, that there exists a sequence $(a_n)$, $a_n\toi$ such that
\begin{equation}\label{e:a_n}
n\P(\|\X_0\| > a_n x)\to x^{-\alpha}\quad \textrm{ for all } x>0.
\end{equation}
According to Basrak and Segers~\cite{BS}, the regular variation of the stationary sequence $(\X_n)$ is equivalent to the existence of the \textit{tail process} $(\Y_n)_{n \in \Z}$ which satisfies $\P(\|\Y_0\| > y)=y^{-\alpha}$ for $y \geq 1$ and, as $x\to \infty$,
\begin{equation}\label{e:tailprocess}
 \left( (x^{-1}\, \X_n)_{n \in \Z} \, \big| \, |\X_0| > x \right)
  \fidi (\Y_n)_{n \in \Z},
\end{equation}
where $\fidi$ denotes convergence of finite-dimensional distributions. Moreover,  the so-called
\textit{spectral tail process}   $(\bt_n)_{n \in \Z}$ defined 
as a sequence $\bt_n = \Y_n/\|\Y_0\|\,,$  ${n \in \Z},$   turns out to be independent of $\|\Y_0\|$ and satisfies
\begin{equation}\label{e:theta}
 \left( (\|\X_0\|^{-1}\, \X_n)_{n \in \Z} \, \big| \, \|\X_0\| > x \right)
  \fidi (\bt_n)_{n \in \Z}.
\end{equation}
 as $x\toi$.
In the sequel, we assume that there exists a sequence $(r_n)$, where $r_n\toi$ and $n/r_n \toi$, such that $(\X_n)$ and $(r_n)$ satisfy the following two conditions.
Denote first
 $\E=\overline{\R}^d\setminus \{\boldsymbol{0}\}=[-\infty, \infty]^d\setminus \{\boldsymbol{0}\}$.\\
\smallskip

\noindent
{\bf Main assumptions}\\[2mm]
\noindent ($\mathcal{A}'$):
For every $f \in C_{K}^{+}([0,1] \times
\E)$, denoting $k_{n} = \lfloor n / r_{n} \rfloor$, as $n \to
\infty$,
\begin{equation}\label{e:A'}
 \EE \biggl[ \exp \biggl\{ - \sum_{i=1}^{n} f \biggl(\frac{i}{n}, a_n^{-1}{\X_{i}}
 \biggr) \biggr\} \biggr]
 - \prod_{k=1}^{k_{n}} \EE \biggl[ \exp \biggl\{ - \sum_{i=1}^{r_{n}} f \biggl(\frac{kr_{n}}{n}, a_n^{-1}{\X_{i}} \biggr) \biggr\} \biggr] \to 0.
\end{equation}
\noindent ($\mathcal{AC}$):
For every $u > 0$,
\begin{equation}
\label{e:AC}
  \lim_{m \to \infty} \limsup_{n \to \infty}
  \P \biggl( \max_{m \le |i| \le r_{n}} \|{\X_{i}}\| > a_n u\,\bigg|\,\|{\X_{0}}\|>a_n u \biggr) = 0.
\end{equation}

For thorough discussion of conditions ($\mathcal{A}'$) and
($\mathcal{AC}$) we refer to \cite{BKS} or Bartkiewicz et al \cite{BJM}. We observe here
only that  $m$-dependent multivariate sequences clearly satisfy both assumptions. More general strongly mixing sequences always satisfy condition $\mathcal{A}'$, which  roughly speaking,  allows one to break  the sequence  into increasing and asymptotically  independent blocks. The condition $\mathcal{AC}$ on the other hand  restricts the clustering of extremes in the sequence $(\|\X_t\|)$ over time. 
In addition to $m$-dependent sequences, it is satisfied for many other frequently used heavy tailed models,
including for instance stochastic volatility or ARCH and GARCH processes,
see \cite{BJM} and \cite{BKS}.

The main object of interest in the sequel  is the multivariate version of the  point process
in \eqref{e11}
$$
N_n=\sum_{i=1}^n \delta_{(i/n, \X_i/a_n)}
$$
on  the state space $[0,1]\times \E$.
 Throughout, the space of point measures on a given space, $\mathbb{S}$ say, is denoted by $M_p(\mathbb{S})$ and 
  endowed by the vague topology, see Resnick~\cite{Res87}.

Under very similar assumptions as above, Davis and Hsing in \cite{DH} analyzed the projection of the process $N_n$ on the spatial coordinate in the univariate case. They showed that the point processes 
$N^*_n =\sum_{i=1}^n \delta_{X_i/a_n}$, $n \in \Z$,
on the space $[-\infty,0)\cup (0,\infty]$, converge in distribution to a point process
which has a Poisson cluster structure which is only implicitly described. 
They further apply this result to determine the limiting distribution of
the partial sums in the sequence $(X_n)$.
These results were extended to the multivariate setting by Davis and Mikosch~\cite{DM98}.
 Clearly, unlike $N_n$,
the point processes $N^*_n$ contains no information about time clustering of extremes in the sequence $(X_n)$.

Point processes $N_n$ were already studied by Basrak et al. in \cite{BKS} in the univariate case, but the proof therein carries over to the multivariate case as observed in
Basrak and Krizmani\'c \cite{BK15} with some straightforward adjustments, to show
\begin{equation} \label{e:Nu}
N_n \Big|_{[0,1]\times  (\overline{\R}\setminus [-u,u])^d  } 
\Rightarrow N^{(u)} \Big|_{[0,1]\times (\overline{\R}\setminus [-u,u])^d } \,, 
\end{equation}
as $n\toi$,  for any threshold $u>0$, with the limit $N^{(u)}$ unfortunately
depending on that threshold. 
This  asymptotic results can be used to deduce  functional limit theorems for partial sums or partial maxima in the time series $(X_t)$,
 see Theorem 3.4 in \cite{BKS} or Proposition~\ref{Thm:ExProc} below.
Our goal here is to avoid  restriction to various domains in \eqref{e:Nu}, to find the correspondence of this result
with the earlier results of Davis and Hsing~\cite{DH} 
and Davis and Mikosch~\cite{DM98}.
Our main theorem below essentially unifies theorems on the limiting behavior of point processes
given in  \cite{DH,DM98,BKS}. Moreover, it reconciles their apparently very different statements in the framework suggested by 
Mori's result \cite{Mo77}.

 In the following section we show an interesting preliminary result  about the structure of extreme clusters in the process $(\X_n)$. Then, in Section~\ref{Sec:main}, we prove a general theorem
 about point process convergence for regularly varying time series. 
 In Section~\ref{Sec:etrem}, we apply this theorem to show the invariance principle
 for the so-called maximal process in the space $D[0,1]$. A corresponding theorem for iid sequences is well known and can be found in Resnick~\cite{Res87}. For nonnegative stationary regularly varying sequences, it was recently proved by Krizmani\'c~\cite{DK14}. Our version of this result
 includes other  regularly varying sequences, and since it relies on our main theorem,
 the proof is straightforward and relatively  simple. We also exhibit a simple technique to avoid problems at
 the left tail which are typically a source of frustration in this sort of limiting theorems.

\section{Preliminaries}

It was shown in \cite{BS} that under the main assumptions above,
there exists
\begin{equation}\label{theta}
\theta=
\P \left( \sup_{i\leq -1}\|\Y_i\| \leq 1\right)=
\lim_{r\to \infty}\lim_{x \to \infty}\P(M_r \leq x \big| \|\X_{0}\| > x) >0,
\end{equation}
where $M_{r}=\max\{\|\X_k\|: k=i,\dots,r\}$. 
The number $\theta>0$ 
  represents
the extremal index of the random nonnegative sequence $(\|\X_t\|)$. 

Following \cite{BS}, we define an auxiliary sequence $(\ZZ_j)_{j\in \Z}$ as a sequence of random variables
distributed as $(\Y_j)_{j\in \Z}$ conditionally on the event $\{\sup_{i\leq -1}\|\Y_i\| \leq 1\}$. 
More precisely,
\begin{equation}\label{e:z}
\mathcal{L}\left( \sum_{j\in \mathbb{Z}} \delta_{\ZZ_j}\right) = \mathcal{L}\left(\sum_{i\in \mathbb{Z}} \delta_{\Y_i}\, \Big|\, \sup_{i \leq -1} \|\Y_i\| \leq 1 \right)\,,
\end{equation}
where $\mathcal{L}(N)$ denotes the distribution of the point process $N$.
It was shown in Theorem 4.3. in \cite{BS} that
\begin{equation}
\label{E:clusterprocess}
    \mathcal{L} \biggl( \sum_{i=1}^{r_n} \delta_{(a_n u)^{-1} \X_i} \, \bigg| \, M_{r_n} >a_n u \biggr)
    \Rightarrow \sum_{j\in \Z}\delta_{\ZZ_j}.
\end{equation}
Expressing the weak convergence of point processes using Laplace functionals,  \eqref{E:clusterprocess} is equivalent to 
\begin{equation}\label{E:clusterprocesspom}
\EE \left( e^{-\sum_{i=1}^{r_n} f((a_n u)^{-1}\X_i) } \, \big|\,   M_{r_n} >a_n u \right) \to \EE\left[e^{-\sum_{j\in \mathbb{Z}}f(\Y_j)}\,\big| \, \sup_{i \le -1} \|\Y_i\|\leq 1 \right]
\end{equation}
for all $u \in (0,\infty)$ and $f \in C_{K}^{+}(\E)$.

Observe that by $\mathcal{AC}$,  $\|\Y_n\|\to 0$ with probability 1 as $|n|\toi$, see Proposition 4.2
in  \cite{BS}. Therefore the same holds for $\ZZ_n$'s in \eqref{e:z}.
In particular, the random variable
\begin{equation*}
L_Z=\sup_{j\in \Z} \|\ZZ_j\|
\end{equation*} 
is a.s. finite, and clearly not smaller than 1. To determine the distribution of $L_Z$, 
observe that from Proposition 4.2
in  \cite{BS} it follows that 
\begin{equation}\label{e:supZ}
 k_n \P ( M_{r_n} > a_n u) \to \theta u^{-\alpha}.
\end{equation}
Now  for $v \geq 1$, we have by \eqref{E:clusterprocess}
\begin{eqnarray*}
\lefteqn{
\P(L_Z>v)=1-\P\left(\sum_j \delta_{\|\ZZ_j\|}((v,\infty))=0 \right)} \\
&=&1-\lim_{n\toi} \P\left(\sum_{i=1}^{r_n}\delta_{(a_n u)^{-1}\|\X_i\|}((v,\infty))=0\,\vert\, M_{r_n} >a_n u  \right)\\
&=&\lim_{n\toi} \P(M_{r_n}>a_n uv\,\vert\,M_{r_n} >a_n u )=\lim_{n\toi}\frac{k_n \P(M_{r_n} >a_n uv)}{k_n \P(M_{r_n} >a_n u)}.
\end{eqnarray*}
Therefore,
\begin{equation}\label{e:L_Z}
\P(L_Z>v)= v^{-\alpha}.
\end{equation}
For $(\ZZ_j)$ as in \eqref{e:z}, we define a new sequence  $(\Q_j)_{j\in \Z}$ by
\[
 \Q_j=\ZZ_j / L_Z\,,\qquad j\in \Z\,.
\]
 We will show the independence between the point process  $\sum_j \delta_{\Q_j}$ and  the random variable $L_Z$, which might not be entirely surprising in the view of the 
 independence between $\|\Y_0\|$ and the spectral tail process in \eqref{e:theta}. However, this result
 seems to be new.

\begin{Prop}\label{P:main_pomocni} Assume that a regularly varying stationary sequence $(\X_n)$ 
with corresponding sequence $(r_n)$ satisfies conditions $\mathcal{A}'$ and $\mathcal{AC}$. Then
\begin{equation}\label{e:main_pomocni}
\left(\sum_{i=1}^{r_n} \delta_{M_{r_n}^{-1}\\X_i}, \frac{M_{r_n}}{a_n u}\: \Big| \: M_{r_n} > a_n u\right) \Rightarrow \left(\sum_{j}\delta_{\Q_j},  L_Z\right).
\end{equation}
Moreover, $L_Z$ and  $\sum_j \delta_{\Q_j}$ on the right hand side  are independent.
\end{Prop}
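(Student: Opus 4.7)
The plan has two stages. First, upgrade the cluster convergence \eqref{E:clusterprocess} to a joint limit that also records the maximum $M_{r_n}/(a_n u)$, then transport the joint limit through the continuous map that divides every atom by that maximum. Second, derive independence from the observation that the marginal distribution of $\sum_{i=1}^{r_n}\delta_{\X_i/M_{r_n}}$ conditioned on $M_{r_n}>a_n u'$ has a limit which does not depend on $u'$.

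For the first stage, the key claim is
\[
\left(\sum_{i=1}^{r_n}\delta_{(a_n u)^{-1}\X_i},\ \frac{M_{r_n}}{a_n u}\,\Big|\,M_{r_n}>a_n u\right)\Rightarrow\left(\sum_{j\in\Z}\delta_{\ZZ_j},\ L_Z\right).
\]
Both second coordinates are the sup-of-norm functional applied to the first. This is the delicate point: the functional is not globally continuous on $M_p(\E)$, because in the limit atoms can accumulate near $0$. However, on the subset of measures with at least one atom of norm $\ge\eta$ (for some fixed $\eta\in(0,1)$) it is continuous, since vague convergence forces $N_n(\{\|x\|>v\})=N(\{\|x\|>v\})$ eventually for any $v$ that is not an atom-norm of $N$. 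The conditioning $M_{r_n}>a_n u$ places the pre-limit object in this subset, and the a.s.\ bound $L_Z\ge 1$ does the same for the limit, so continuous mapping applied to \eqref{E:clusterprocess} delivers the displayed joint convergence. A second application of continuous mapping to $(N,m)\mapsto(\sum_i\delta_{x_i/m},m)$, continuous wherever $m>0$, then yields \eqref{e:main_pomocni}.

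For the independence claim, note that the marginal statement from the first stage is $\sum_{i=1}^{r_n}\delta_{\X_i/M_{r_n}}\mid M_{r_n}>a_n u'\Rightarrow\sum_j\delta_{\Q_j}$ for every $u'>0$, with the same limit. Fix a continuity set $A$ for the limit law and $v\ge 1$. Then
\[
\P\!\left(\sum_i\delta_{\X_i/M_{r_n}}\in A,\ \frac{M_{r_n}}{a_n u}>v\,\Big|\,M_{r_n}>a_n u\right)=\frac{\P(M_{r_n}>a_n uv)}{\P(M_{r_n}>a_n u)}\cdot\P\!\left(\sum_i\delta_{\X_i/M_{r_n}}\in A\,\Big|\,M_{r_n}>a_n uv\right).
\]
Letting $n\to\infty$, \eqref{e:supZ} shows the ratio tends to $v^{-\alpha}$ and the marginal convergence (with $u'=uv$) makes the remaining factor tend to $\P(\sum_j\delta_{\Q_j}\in A)$, so the product limit is $\P(L_Z>v)\P(\sum_j\delta_{\Q_j}\in A)$. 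The left-hand side tends to $\P(\sum_j\delta_{\Q_j}\in A,\,L_Z>v)$ by the first stage, which (since $A$ was arbitrary and $L_Z$ has a continuous distribution) gives the independence of $\sum_j\delta_{\Q_j}$ and $L_Z$.
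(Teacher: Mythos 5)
Your proposal is correct and follows essentially the same route as the paper: both obtain the joint limit by applying the (a.s.\ continuous) supremum-of-norms functional and then the rescaling map to \eqref{E:clusterprocess}, and both derive independence by factoring the joint pre-limit quantity through the higher threshold $a_n uv$ and invoking \eqref{e:supZ} together with the threshold-invariance of the marginal cluster limit. The only differences are cosmetic: you argue the partial continuity of the sup functional directly where the paper cites \cite{DH}, and you work with continuity sets where the paper uses Laplace functionals.
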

\begin{proof}
For $m= \sum_i \delta_{\x_i}\ \in M_p(\E)$ denote by $x_{m}$ the largest norm of any  point in $m$, i.e. $x_{m}=\sup_i \|\x_i\|$  and define the mapping $\phi$ by
$$\phi: m \mapsto(m, x_{m})\,.$$
As observed in \cite{DH} such a mapping is continuous. Hence,
applying $\phi$ to \eqref{E:clusterprocess}, we obtain
\begin{equation}\label{e:main_pomocni_1}
\left(\sum_{i=1}^{r_n} \delta_{(a_n u)^{-1}\X_i}, \frac{M_{r_n}}{a_n u}\: \Big| \: M_{r_n} > a_n u\right) \Rightarrow \left(\sum_{j}\delta_{\ZZ_j},  L_Z\right).
\end{equation}
Consider  for $\nu \in M_p$ and $b\in (0,\infty)$, the mapping
$$\psi: (\nu,b) \mapsto \nu_b \in M_p,$$
where $\nu_b ( \cdot ) = \nu(b^{-1}\cdot)$.
Mapping $\psi$ is again continuous by Proposition 3.18 in Resnick \cite{Res07} for instance.
 Hence, \eqref{e:main_pomocni_1} implies \eqref{e:main_pomocni}  by the continuous mapping theorem.

To show the  independence between $L_Z$ and $\sum_j \delta_{\Q_j}$, it suffices to show
\begin{equation}\label{e:nez1}
\EE \left[\exp  \left(-\sum_j f_1({\Q_j})\right) \mathbbm{1}_{\{L_Z >v\}} \right]=\EE \left[\exp  \left(- \sum_j f_1({\Q_j})\right)\right]  P({L_Z >v })\,,
\end{equation}
for an arbitrary function $f_1\in C_K^{+} (\E)$ and $v \geq 1$.
By \eqref{e:main_pomocni},  the left-hand side of \eqref{e:nez1} is the limit of 
 $$\EE \left[\exp \left(- \sum_{i=1}^{r_n}f_1({\X_i/M_{r_n}}) \right)\,\mathbbm{1}_{\{(ua_n)^{-1}M_{r_n}>v\}}\bigg\vert M_{r_n}>a_n\right],$$
 which  further equals
  $$\EE  \left[\exp \left(- \sum_{i=1}^{r_n}f_1({\X_i/M_{r_n}}) \right) \, \bigg\vert M_{r_n}>a_n v\right]\frac{\P(M_{r_n}>a_n  v)}{\P(M_{r_n}>a_n )}.$$
By \eqref{e:main_pomocni} and the continuous mapping theorem,
the first term above tends to $\EE\left[\exp\left(-\sum_j f_1({\Q_j})\right)\right]$ as $n\toi$. By \eqref{e:supZ}, the second term tends to $\P(L_Z >v)
=v^{-\alpha}$. which implies  \eqref{e:nez1}.
\end{proof}

\section{Main theorem}\label{Sec:main}
\begin{Thm}\label{main}
Let $(\X_n)_{n\in \Z}$ be a stationary series of jointly regularly varying random vectors with index $\alpha$, satisfying conditions $\mathcal{A}'$ and $\mathcal{AC}$ for a certain sequence $(r_n)$. Then
\begin{equation} \label{main_2}
N_n=\sum_{i=1}^n \delta_{(i/n, \X_i/a_n)} \Rightarrow N=\sum_{i}\sum_{j}\delta_{(T_i, P_i\be_{ij})},
\end{equation}
where 
\begin{itemize}
\item[i)] $\sum_{i}\delta_{(T_i,P_i)}$ is a Poisson process on $[0,1]\times(0,\infty)$ with intensity measure $Leb \times \nu$ where $\nu(dy)= \theta \alpha y^{-\alpha-1}dy$ for $y>0$.
\item[ii)] $(\sum_{j}\delta_{\be_{ij}})_{i}$ is an i.i.d. sequence of point processes in $\E$ independent of $\sum_{i}\delta_{(T_i, P_i)}$ and with common distribution equal to the distribution of $\sum_{j}\delta_{\Q_j}$.
\end{itemize}
\end{Thm}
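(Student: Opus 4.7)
The plan is to verify the weak convergence in \eqref{main_2} by showing that the Laplace functionals agree in the limit, i.e.\ for every $f \in C_K^+([0,1]\times \E)$,
\[
\EE[e^{-N_n(f)}] \longrightarrow \EE[e^{-N(f)}],
\]
and to compute both sides as the same explicit exponential. Since $f$ has compact support in $[0,1]\times\E$, fix $u_0>0$ such that $f(t,\x)=0$ whenever $\|\x\|\leq u_0$, and choose any $u\in(0,u_0]$. By condition $\mathcal{A}'$, it suffices to analyse
\[
\Pi_n(f) := \prod_{k=1}^{k_n}\EE\bigl[e^{-\sum_{i=1}^{r_n} f(kr_n/n,\,a_n^{-1}\X_i)}\bigr].
\]

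For the $k$-th factor, write $t_k=kr_n/n$. The exponent vanishes on $\{M_{r_n}\leq a_n u\}$, so the factor equals $1-\P(M_{r_n}>a_n u)(1-H_n^{(k)})$ with $H_n^{(k)}:=\EE[e^{-\sum_i f(t_k,a_n^{-1}\X_i)}\mid M_{r_n}>a_n u]$. Rewriting $a_n^{-1}\X_i=u\cdot(M_{r_n}/(a_nu))\cdot M_{r_n}^{-1}\X_i$ and applying Proposition~\ref{P:main_pomocni} together with the continuous mapping theorem gives $H_n^{(k)}\to\EE[e^{-\sum_j f(t_k,uL_Z\Q_j)}]$. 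Using the independence of $L_Z$ and $(\Q_j)_j$ from Proposition~\ref{P:main_pomocni} and the distribution \eqref{e:L_Z}, a change of variables yields
\[
1-\EE[e^{-\sum_j f(t_k,uL_Z\Q_j)}] = u^\alpha \int_u^\infty \bigl(1-\EE[e^{-\sum_j f(t_k,v\Q_j)}]\bigr)\alpha v^{-\alpha-1}\,dv.
\]
Because $\|\Q_j\|\leq 1$ and $f(t_k,\cdot)=0$ on $\{\|\cdot\|\leq u_0\}$, the integrand vanishes for $v<u_0$, so for $u\in(0,u_0]$ the integral is independent of $u$; denote $\theta$ times this integral by $c(t_k)$. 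Combining with $k_n\P(M_{r_n}>a_n u)\to\theta u^{-\alpha}$ from \eqref{e:supZ} gives the uniform expansion $\EE[e^{-\sum_i f(t_k,a_n^{-1}\X_i)}] = 1 - c(t_k)/k_n + o(1/k_n)$.

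Taking logarithms, the sum $\sum_k c(t_k)/k_n$ is a Riemann sum for $\int_0^1 c(t)\,dt$ (with $c$ continuous and bounded by a constant multiple of $\theta u_0^{-\alpha}$), while the remainder is $k_n\cdot o(1/k_n)+O(\sum_k (c(t_k)/k_n)^2)=o(1)$. Therefore
\[
\EE[e^{-N_n(f)}] \longrightarrow \exp\Bigl\{-\theta\!\int_0^1\!\!\int_0^\infty\!\bigl(1-\EE[e^{-\sum_j f(t,v\Q_j)}]\bigr)\alpha v^{-\alpha-1}\,dv\,dt\Bigr\}.
\]
It remains to check that this equals $\EE[e^{-N(f)}]$. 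Conditioning on the Poisson process $(T_i,P_i)_i$ and using the iid structure $(\sum_j\delta_{\be_{ij}})_i \eqd (\sum_j\delta_{\Q_j})$,
\[
\EE[e^{-N(f)}] = \EE\Bigl[\prod_i G(T_i,P_i)\Bigr],\qquad G(t,p):=\EE[e^{-\sum_j f(t,p\Q_j)}],
\]
and the standard Laplace functional of a Poisson process with intensity $Leb\otimes \theta\alpha p^{-\alpha-1}dp$ produces exactly the same exponential.

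The main technical obstacle is justifying the passage from the qualitative convergence supplied by Proposition~\ref{P:main_pomocni} to the quantitative $1/k_n$ expansion, \emph{uniformly in} $k$, so that summing $k_n$ error terms still yields $o(1)$, and ensuring that $c(t_k)$ are values of a continuous bounded function so the Riemann sum converges. Both facts follow from the uniform continuity of $f$ on its compact support and the uniform bound $0\leq H_n^{(k)}\leq 1$, but they constitute the most delicate bookkeeping in the argument.
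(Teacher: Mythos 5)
Your argument is correct and it identifies the limit by the same essential mechanism as the paper: the independence of $L_Z$ and $\sum_j\delta_{\Q_j}$ from Proposition~\ref{P:main_pomocni}, the Pareto law \eqref{e:L_Z}, and the observation that $\sup_j\|\Q_j\|=1$ together with $f$ vanishing below the threshold make the final expression independent of $u$. Where you genuinely differ is in how you reach the intermediate, threshold-dependent limit. The paper simply invokes the known convergence \eqref{e:Nu} (the multivariate form of Theorem~2.3 in \cite{BKS}), which already yields $\EE e^{-N_n(f)}\to\exp\bigl[-\int_0^1\bigl(1-\EE e^{-\sum_j f(t,u\ZZ_j)}\bigr)\theta u^{-\alpha}\,dt\bigr]$, and then devotes its entire proof to showing that this expression equals $\EE e^{-N(f)}$. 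You instead re-derive that convergence from first principles: condition $\mathcal{A}'$ to pass to the block product, the cluster convergence \eqref{E:clusterprocess} and Proposition~\ref{P:main_pomocni} for each block, the normalization \eqref{e:supZ} for the $1/k_n$ expansion, and a Riemann-sum argument after taking logarithms; you also fold the substitution $\ZZ_j=L_Z\Q_j$ directly into the block asymptotics rather than performing it on the limit expression. What this buys is a self-contained proof that lands directly on the Mori-type form; what it costs is having to establish the $o(1/k_n)$ error \emph{uniformly} in $k$ and the continuity of $t\mapsto c(t)$ --- precisely the bookkeeping that the citation to \cite{BKS} absorbs. You correctly flag this as the delicate point, and the tools you name (uniform continuity of $f$ on its compact support, the bound $c(t)\le\theta u_0^{-\alpha}$, $0\le H_n^{(k)}\le 1$) are the right ones; carrying them out in full would essentially reproduce the proof of Theorem~2.3 in \cite{BKS}, so no step of your outline would fail.
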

\begin{proof}
To show \eqref{main_2}, it is sufficient to  show that 
$$\EE e^{-N_n(f)}\to \EE e^{-N(f)},$$
for an arbitrary $f\in C_{K}^{+}([0,1]\times \E)$. 
Observe that for any such function $f$ there is a
constant $u>0$, such that the support of $f$ is a subset of $[0,1]\times \E_u $. 
By  \eqref{e:Nu}, i.e. by the multivariate expansion of Theorem 2.3 in \cite{BKS}, we know that
\begin{equation} \label{e:Nuf}
\EE e^{-N_n(f)}\to \exp \left[-\int_0^1\left(1-\EE e^{-\sum_j f(t,u\ZZ_j)}\right) \theta u^{-\alpha} dt\right]
\end{equation}
Consequently, it suffices to show that the right hand side above corresponds to
$\EE e^{-N(f)}$ for any such function $f$ and corresponding $u>0$.

As in Theorem 2.3 in \cite{BKS}, one can write
\begin{eqnarray*}
\EE e^{-N(f)}&=&\EE \exp\{-\sum_i \sum_j f(T_i, P_i \be_{ij})\}\\
&=&\EE \left[\EE\left(\prod_i \exp\{- \sum_j f(T_i, P_i \be_{ij})\}\big\vert ((T_k, P_k))_k\right)\right]\\
&=&\EE \left[\prod_i \EE\left(\exp\{- \sum_j f(T_i, P_i \be_{ij})\}\big\vert ((T_k, P_k))_k\right)\right],
\end{eqnarray*}
where the last equation follows from Lemma 3.10 in \cite{Res87}, since $\sum_{i}\delta_{(T_i, P_i)}$ and   
$(\sum_{j}\delta_{\be_{ij}})_{i}$ on the right-hand side of \eqref{main_2} are independent.
Define $h(t,v)=\EE \exp\{-\sum_j f(t,v\Q_j)\}$. We have
\begin{equation*}
\EE e^{-N(f)}= \EE \exp \left( \sum_i \ln h(T_i, P_i) \right).
\end{equation*}
The right-hand side is the Laplace functional of $\sum_{i}\delta_{(T_i,P_i)}$ evaluated at $-\ln h$. 
 Since $\sum_{i}\delta_{(T_i,P_i)}\sim $PRM$(Leb \times \nu)$ on $[0,1]\times(0,\infty)$, 
 the Laplace functional of this process 
 has a very special form, see Resnick~\cite{Res87}.
\begin{alignat}{2}
\EE e^{-N(f)}&=\exp \left[-\int_{[0,1]\times (0,\infty)} \left(1-h(t,v)\right) (Leb \times \nu)(dt, dv)\right]\notag\\
&=\exp \left[-\int_0^1\int_0^{\infty}\left(1-\EE e^{-\sum_j f(t,v\Q_j)}\right) \theta \alpha v^{-\alpha-1}dv dt\right].\label{e:main_1}
\end{alignat}
Consider now the right hand side in \eqref{e:Nuf}.
Using $\ZZ_j =L_Z  \Q_j$, the distribution of $L_Z$ calculated in \eqref{e:L_Z} and independence shown in Proposition \ref{P:main_pomocni}, we have
\begin{eqnarray*}
\lefteqn{
\exp \left[-\int_0^1\left(1-\EE e^{-\sum_j f(t,u\ZZ_j)}\right) \theta u^{-\alpha} dt\right]}\\
&=&\exp \left[-\int_0^1\int_1^{\infty}\left(1-\EE e^{-\sum_j f(t,u l \Q_j)}\right) \alpha l^{-\alpha-1}dl\theta u^{-\alpha} dt\right]\\
&=&\exp \left[-\int_0^1\int_u^{\infty}\left(1-\EE e^{-\sum_j f(t,v \Q_j)}\right) \alpha v^{-\alpha-1}dv\theta  dt\right]\\
&=&\exp \left[-\int_0^1\int_0^{\infty}\left(1-\EE e^{-\sum_j f(t,v\Q_j)}\right) \theta \alpha v^{-\alpha-1}dv dt\right]=\EE e^{-N(f)}, 
\end{eqnarray*}
where the second equality follows using the change of variable $v=ul$, and the last equality follows from the fact that $\sup_j \|\Q_j\|=1$ and $f(t,x)=0$ for $x<u$. 
\end{proof}

As we discussed in the introduction, one consequence of the theorem above is the functional limit theorem
for partial sums of the univariate sequence $(X_t)$, see Theorem~3.4 in \cite{BKS}.
In the multivariate case, under the conditions of Theorem~\ref{main},  it was shown 
by Davis and Mikosch~\cite{DM98} that partial sums $S_n = \X_1+ \cdots + \X_n,\ n\geq 1\,,$ satisfy
\begin{equation} \label{LimThm}
\frac{S_n}{a_n} \dto \xi_\alpha\,,
\end{equation}
for $\alpha\in (0,1)$ and   an $\alpha$-stable random vector $\xi_\alpha$,
actually the same holds for $\alpha\in [1,2)$ if one assumes for instance  that $\X_i$'s have a symmetric distribution and that  for any $\delta>0$ the following standard
technical assumption holds
$ \lim_{\vep\to 0} \limsup_{n\toi} \P (\| a_n^{-1} \sum_{i=1}^n \X_i \1_{\{\| \X_i \| \leq \vep a_n  \}} \| > \delta)=0$\,, see~\cite{DM98} again.

 As observed by Mikosch and Wintenberger~\cite{MW13}, under condition that 
 \[
  \EE \left( \dsum_{j\geq 1} \| \Q_j \| \right)^{\alpha} < \infty\,,
 \]
which indeed always holds in the case  $\alpha<1$ or if $(\X_n)$ is $m$--dependent,
the spectral measure $\Gamma_\alpha$ of the stable random vector $\xi_\alpha$  in \eqref{LimThm}
can be characterized as follows 
\begin{align*}
\int_{\mathbb{S}^{d-1}} \langle \tB ,\sB  \rangle^\alpha_+ \Gamma_\alpha(d\sB) 
& = \theta \frac{\alpha}{2-\alpha} \EE \left[ \left( \sum_{j\geq 1} \langle \tB ,\Q_j \rangle  \right)^\alpha_+  \right] \\
&=  \theta \frac{\alpha}{2-\alpha} \EE \left[ \left( \sum_{j\geq 1}
 \langle \tB ,\Y_j /\sup_j \|\Y_j \| \rangle   \right)^\alpha_+  
 \,\big| \, \sup_{i \le -1} \|\Y_i\|\leq 1 
 \right] \,,
\end{align*}
where $\tB \in \mathbb{S}^{d-1}$, $\langle \tB , \sB  \rangle$ denotes the inner product on 
on $\mathbb{S}^{d-1}$ and $\| \cdot \|$ denotes the Euclidean norm. 
For a definition of a the spectral measure of a stable random vector, see \cite[Section 2.3]{ST}. 
This can be used in certain cases
to give   an alternative representation of the so-called cluster index $ b(\tB) \in \mathbb{S}^{d-1}$ studied by
Mikosch and Wintenberger in \cite{MW13,MW15}.  Indeed, for a regularly varying  $(\X_n)$ which is  Markov chain  under conditions of \cite[Theorem 4.1]{MW13}
\[
 b(\tB) = \frac{ 1-\alpha}{ \Gamma (2-\alpha) \cos (\pi \alpha /2)}
 \int_{\mathbb{S}^{d-1}} \langle \tB ,\sB  \rangle^\alpha_+ \Gamma_\alpha(d\sB) \,.
\]
As observed in \cite{MW13,MW15}, the cluster index $b(\tB)$ plays the key role in the 
description of  large deviations for partial sums of regularly varying sequences.

A more direct consequence of Theorem~\ref{main} is  the functional limit theorem
for the partial maxima of a univariate sequence considered in the following section.

\section{Invariance principle for maximal process} \label{Sec:etrem}

Denote by  $M'_n = \max\{X_1,\ldots , X_n\},\ n\geq 1$.
Following Resnick~\cite{Res87} or Embrechts et al. \cite{EKM} we consider the following
continuous time  c\`adl\`ag  process
\[
 Y_n (t) = \left\{ \begin{array}{cr}
  M'_{\lfloor nt \rfloor}/ a_n & t\geq 1/n\,,\\
  X_1 /a_n & t <1/n\,,\\
 \end{array} \right.
\]
indexed over the segment $[0,1]$. It is actually customary to exclude $t=0$ 
to avoid technical problems, see \cite{Res87}. In our approach, such issues are easily avoided, and
therefore we include $t=0$. From the practical perspective, it seems sufficient to  consider  $t\leq 1$. Extending the main result below to  $t \in [0,\infty)$ remains a  technical, but relatively straightforward task, see Chapter 3 in Billingsley \cite{Bil2}.

Recall that the extremal process generated by 
an extreme value distribution function $G$ ($G$-extremal process, for short) is a continuous time stochastic process 
with finite dimensional distributions $G_{s_1,\ldots,s_k}$ 
satisfying
\[ 
 G_{s_1,\ldots,s_k}(x_1,\ldots,x_k)=G^{s_1}(\wedge_{i=1}^k x_i)G^{s_2-s_1}(\wedge_{i=2}^k x_i)\cdots G^{s_k-s_{k-1}}(x_k)  \,,
 \]
 for all choices of $k\geq 1$, $0<s_1<\cdots < s_k$, $x_i\in\mathbb{R}$, $i=1,\ldots,k$, see Resnick~\cite{Res87}.
 The processes $Y_n$ introduced above are clearly random elements in  the space of real valued c\`adl\`ag functions  $D[0,1]$. We will show that they converge weakly to a particular extremal process
with respect to  Skorohod's $M_1$ topology on $D[0,1]$. We refer to  Whitt~\cite{Whitt02} for the 
definition and discussion of various topologies in that space, see also~\cite{BKS}. 
 
In the proposition below we make a small technical  assumption about the right tail of the marginal distribution of $X_t$'s, i.e. we suppose that it satisfies
 $ \liminf_{n\toi} n \P(X_0 > a_n) >0$. This implies that 
\begin{equation}\label{e:a_np}
n\P(X_0 > a_n x)\to p x^{-\alpha}\quad \textrm{ for all } x>0,
\end{equation}
and some constant $p \in (0,1]$.

\begin{Prop}\label{Thm:ExProc}
Let $(X_n)_{n\in \Z}$ be a stationary sequence of jointly regularly varying random variables with index $\alpha$, satisfying conditions $\mathcal{A}'$ and $\mathcal{AC}$. Assume that $ \liminf_{n\toi} n \P(X_0 > a_n) >0$,
then\\[2mm]
\begin{equation} \label{main_2}
Y_n  \Rightarrow \xi,
\end{equation}
where $\xi(t),\ t>0$ is a $G$-extremal process for  the nonstandard 
Fr\'echet distribution function
$
 G(x) = e^{-\kappa x^{-\alpha}},\  x\geq 0\,,
$
for some $\kappa>0$,
and the convergence takes place in $D[0,1]$ endowed with the Skorohod's $M_1$ topology.
\end{Prop}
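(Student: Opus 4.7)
The plan is to deduce the invariance principle from Theorem~\ref{main} by a continuous-mapping argument, combined with a positive-part truncation to handle the (possibly heavy) left tail. For each $u>0$ introduce the functional $\Psi_u:M_p([0,1]\times\E)\to D[0,1]$ defined by
\[
 \Psi_u(\mu)(t) \;=\; u \,\vee\, \sup\{\,y>u \,:\, (s,y)\in\mathrm{supp}\,\mu,\ s\le t\,\},
\]
with $\sup\emptyset=-\infty$. The truncated running-max process $Y_n^{(u)}:=\Psi_u(N_n)$ satisfies $Y_n^{(u)}(t)=u\vee Y_n(t)$ for $t\ge 1/n$, so it coincides with $Y_n$ after the first excursion of $Y_n$ above $u$.

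First I would verify that $\Psi_u$ is continuous, in Skorokhod's $M_1$ topology, at every point measure having finitely many atoms above each level $u'>u$ and whose distinct clusters occupy distinct times. Both properties hold almost surely for the limit $N=\sum_i\sum_j\delta_{(T_i,P_i\be_{ij})}$ of Theorem~\ref{main}: the $T_i$'s are a.s.\ distinct because $\sum_i\delta_{(T_i,P_i)}$ is a PRM on $[0,1]\times(0,\infty)$, and each cluster $\sum_j\delta_{\be_{ij}}$ is locally finite above any positive level. The continuous mapping theorem then yields $Y_n^{(u)}\Rightarrow\xi^{(u)}:=\Psi_u(N)$ in $(D[0,1],d_{M_1})$.

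Next I would identify $\xi^{(u)}$ explicitly. Setting $W_i:=\max_j\be_{ij}$ (a scalar in the univariate case), one has $\xi^{(u)}(t)=u\vee\max\{P_iW_i^+:T_i\le t,\;P_iW_i>u\}$. By the independence in Theorem~\ref{main}\,ii) and the mapping/thinning theorem for Poisson processes, $\sum_i\delta_{(T_i,P_iW_i^+)}\mathbf{1}(W_i>0)$ is a PRM on $[0,1]\times(0,\infty)$ with intensity
\[
 \tilde\nu\big((x,\infty)\big)
 \;=\; \int_0^\infty\P(yW>x)\,\theta\alpha y^{-\alpha-1}\,dy
 \;=\; \theta\,\EE[(W^+)^\alpha]\,x^{-\alpha}
 \;=:\; \kappa\,x^{-\alpha},
\]
by a short change-of-variable computation. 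Assumption~\eqref{e:a_np} with $p>0$ forces $\P(W>0)>0$, hence $\kappa>0$, and standard extremal-process theory (Resnick~\cite{Res87}) then identifies $\xi^{(u)}$ as the $u$-truncation of the $G$-extremal process for the Fr\'echet law $G(x)=e^{-\kappa x^{-\alpha}}$.

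Finally I would remove the truncation. Since $Y_n$ is non-decreasing with $Y_n(0)=X_1/a_n$, a pointwise calculation gives $Y_n^{(u)}(t)-Y_n(t)=(u-Y_n(t))_+\le u+(-X_1/a_n)_+$, so $d_{M_1}(Y_n^{(u)},Y_n)\le\|Y_n^{(u)}-Y_n\|_\infty\to u$ in probability as $n\to\infty$; while $\xi^{(u)}\to\xi$ a.s.\ in $M_1$ as $u\to 0$, for $\xi$ the untruncated $G$-extremal process. The standard double-limit criterion (Theorem~3.2 of~\cite{Bil2}) then delivers $Y_n\Rightarrow\xi$. The subtlest step is the $M_1$-continuity of $\Psi_u$ at clustered configurations: in the $J_1$ topology this map is discontinuous precisely when several $\be_{ij}$ share a time $T_i$, but in $M_1$ these atoms collapse into a single vertical jump passing through the intermediate heights, which has to be verified via the parametric representation of $M_1$ convergence in Whitt~\cite{Whitt02}, along the lines already used in Section~4 of~\cite{BKS} for the nonnegative univariate case.
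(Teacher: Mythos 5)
Your argument is correct and rests on the same strategy as the paper: feed the convergence $N_n\Rightarrow N$ of Theorem~\ref{main} into a continuous-mapping argument for a running-supremum functional, identify the limit law through the Poisson structure of $\sum_i\delta_{(T_i,P_iU_i)}$ (your $\kappa=\theta\,\EE[(W^+)^\alpha]$ is exactly the paper's $\theta\,\EE U^\alpha$ with $U=\sup_j\be_{ij}\vee 0$), and neutralize the left tail by a positive-part device. The technical route differs in one place. You truncate at a level $u>0$, prove $\Psi_u(N_n)\Rightarrow\Psi_u(N)$ for each $u$, and then remove the truncation via the converging-together theorem; the paper instead works with the single functional $T^+(m)(t)=\sup_{t_i\le t}j_i\vee 0$ and proves its $M_1$-continuity on the set $M_p'\cap M_p''$ directly, so no second limit in $u$ is needed. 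Your extra layer is harmless but also avoidable, and it is worth noting that the step you single out as the subtlest --- $M_1$-continuity at clustered configurations --- does not require the parametric representation of $M_1$ convergence: since $T^+m$ (equivalently $\Psi_u(m)$) is nondecreasing, it suffices to check pointwise convergence at the endpoints and on the dense set of times $t$ with $m(\{t\}\times\E)=0$ (Proposition~3.13 of~\cite{Res87}) and then invoke Corollary~12.5.1 of~\cite{Whitt02}, which upgrades pointwise convergence of monotone functions on such a set to $M_1$ convergence; this is precisely how the paper absorbs the collapse of a cluster into a single vertical jump. Two minor points in your write-up deserve a remark but do not damage the proof: the identity $Y_n^{(u)}(t)-Y_n(t)=(u-Y_n(t))_+$ fails for $t<1/n$ when $X_1/a_n>u$ (the correct uniform bound is $\|Y_n^{(u)}-Y_n\|_\infty\le u+|X_1|/a_n$, which still tends to $u$ in probability), and the positivity $\P(W>0)>0$ deduced from \eqref{e:a_np} needs the one-line observation that otherwise $N([0,1]\times(1,\infty])=0$ a.s.\ would contradict $\EE N_n([0,1]\times(1,\infty])=n\P(X_0>a_n)\to p>0$.
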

\begin{proof}
%
Consider the 
functional  $T^+:M_p([0,1]\times \E) \to D[0,1]$ on the space of Radon point measures   given by
\begin{equation}\label{def_functional_Tplus}
T^+(m)(t)= \sup_{t_i\leq t} j_i \vee 0  \,,
\end{equation}
where $m=\sum_{i}\delta_{(t_i,j_i)}$ denotes an arbitrary Radon point measure on the space $[0,1]\times \mathbb{E}$, where we set for convenience
$\sup \emptyset = 0$.

Denote  $M'_p =\{m \in M_p: m([0,s]\times (0,\infty])>0  \mbox{ for all } s>0 \}$ and 
$M''_p =\{m \in M_p: m(\{0,1\}\times (0,\infty])=0  \}$. We will show that
$T^+$ is continuous on the set $M'_p \cap M''_p$. Assume, $m_n \vto m \in M'_p \cap M''_p$.
Because, $m$ is a Radon point measure, 
the set of times $t$ for which $m(\{t\}\times \E ) = 0$ is dense
in $[0,1]$. For all such $t$'s
$$
 T^+ m_n (t) \to T^+ m (t)\,,
$$ 
as $n\toi$ by Proposition 3.13 in \cite{Res87}. Observe that  $T^+ m$ is a nondecreasing function
for any $m$, therefore an application of Corollary 12.5.1 in Whitt~\cite{Whitt02} yields
the convergence of $ T^+ m_n  \to T^+ m $  in $D[0,1]$ endowed with  $M_1$ topology.

Observe now that the limiting point process $N$ in Theorem~\ref{main} lies in  $ M'_p \cap M''_p$
a.s.  Hence for $N_n$ in the same theorem, an application of the continuous mapping theorem yields the weak convergence in
$$
T^+ N_n \Rightarrow  T^+ N \,,
$$
as $n\toi$.

By Slutsky argument, to show that $Y_n \Rightarrow  T^+ N$, it is sufficient to prove that 
$$
 d_{M_1} (Y_n,T^+ N_n) \pto 0\,,
$$ %
where $d_{M_1}$ denotes the $M_1$ metric on the space $D[0,1]$, see Whitt~\cite{Whitt02}.
This follows from the fact that
 metric $d_{M_1}$ is  weaker than the uniform metric $d_\infty$, and the following obvious limit
$$
 d_{\infty} (Y_n,T^+ N_n)  = \frac{|X_1|}{a_n} \1_{\{ X_1<0 \}} \asto 0\,,
$$
as $n\toi$. 

Observe that, by definition,  $N( [0,t]\times(0,\infty] ) =0$  implies  $T^+ N(t) =0$ for a fixed  $t\geq 0$. 
Therefore
\begin{equation}\label{LimExtProc}
T^+ N (t)  = \sup_{T_i\leq t} {P_i \sup \eta_{ij}} =  \sup_{T_i\leq t} {P_i U_i} \,,
\end{equation}
denoting  $U_i =  \sup_j \eta_{ij} \vee 0 $. Note further that $U_i'$s form an iid sequence of random variables  on the interval $[0,1]$. By \eqref{e:a_np}, $\P(U_i \in (0,1]) >0$.
Moreover, sequence $(U_i)$ is independent of the PRM $\sum_i \delta_{T_i,P_i}$.  It is straightforward to
check that $\sum_i \delta_{T_i,P_i U_i}$ is PRM  with  mean measure 
 $Leb \times \nu'$ where $\nu'(dy)= \theta \EE U^\alpha \alpha y^{-\alpha-1}dy$ for $y>0$,
 cf. propositions 3.7 and 3.8 in Resnick~\cite{Res87}. 
 It follows, by proposition 5.4.4 in Embrechts et al.~\cite{EKM} for instance, that 
 $T^+ N (t),\ t>0$ in \eqref{LimExtProc} is a $G$--extremal process for 
 $$
 G(u) =  \P \left(  T^+ N (1) \leq u \right) = 
  \P \left(   \sum_i \delta_{P_i U_i}  (u,\infty]  = 0  \right) 
 = \exp \left(  - \theta \EE U^\alpha u ^{-\alpha} \right)\,,
 $$
for any $ u >0.$


\end{proof}
 
By the proof,  the constant $\kappa$  in Proposition~\ref{Thm:ExProc} equals
$$
 \kappa = \theta \,  \EE U^\alpha\,,
$$
 where $U \eqd  \sup_j \eta_{ij} \vee 0 $. In the iid case, the extremal index $\theta =1$,
 while $ \sum_j \delta_{\eta_{ij}} \eqd \delta_Q$ where $\P(Q=1) = 1- P(Q=-1) = p$.
 Therefore, $\kappa= p$ as known from Proposition 4.20 in Resnick~\cite{Res87}.
 In the case of nonnegative random variables $p=1$ and $\sup \eta_{ij} = 1$ a.s. Therefore
 $\kappa = \theta $ in this case, cf.  Krizmani\'c~\cite{DK14}.
  
\begin{Rem} 
Observe that more commonly used $J_1$ topology is not applicable in our setting, because of the clustering of extremes.
For an illustration of the problem, consider the  point measures $m_n= \delta_{1/2-1/n,1/2} + \delta_{1/2,1}
\vto m =\delta_{1/2,1/2} + \delta_{1/2,1} $ for $n\toi$,  and note that for $T^+$ in \eqref{def_functional_Tplus},
$ T^+ m_n  $ does not converge to $ T^+ m $  in $J_1$ topology.

In the case  $\theta=1$, the limiting point process $N$ is a Poisson random measure, and the convergence in the theorem above holds in the standard $J_1$ topology. The proof only has to be adapted
to show that $T^+$ is an a.s. continuous functional with respect to
the distribution of such a process. Such a result was already stated in Remark 2 of Mori~\cite{Mo77}.
\end{Rem}

\noindent{\bf Acknowledgements}: This work has been supported in part by Croatian Science Foundation under the projects 1356 and 3526.

\bibliographystyle{spmpsci}      
  
\end{document}